\newtheorem*{conjecture}{Conjecture} 
\newtheorem*{id}{Identity}
\newtheorem{theorem}{Theorem} 
\newtheorem{lemma}{Lemma}
\newtheorem{cor}{Corollary}
\theoremstyle{remark}
\newtheorem{rem}{Remark}[theorem]
\begin{document}
\newcommand{\legendre}[2]{\left(\frac{#1}{#2}\right)}
\newcommand{\Gal}{\operatorname{Gal}}
\newcommand\myeq{\stackrel{\mathclap{\normalfont\mbox{def}}}{=}}

\author{Dmitry Krachun}
\title{On sums of triangular numbers}
\maketitle

Let us denote the $k$-th triangular number by $T_k$, that is, $T_k=\frac{k(k+1)}{2}$.
The goal of this note is to prove the following results:

\begin{theorem}
Any non-negative integer number $n$ can be written in the form $$a(2a-1)+b(2b-1)+c(2c+1)+d(2d+1),$$ where $a,b,c,d\in \mathbb{N}_0$.
\label{theorem1}
\end{theorem}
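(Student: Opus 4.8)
My plan is to recast Theorem~\ref{theorem1} as a statement about sums of four squares, isolate the one genuinely hard ingredient, and indicate how to close it out. Since $8\,a(2a-1)+1=(4a-1)^2$ and $8\,c(2c+1)+1=(4c+1)^2$, multiplying the claimed identity by $8$ and adding $4$ shows that the theorem is equivalent to: every $n\ge 0$ admits
\[
8n+4=(4a-1)^2+(4b-1)^2+(4c+1)^2+(4d+1)^2,\qquad a,b,c,d\in\mathbb{N}_0 .
\]
If non-negativity of $a,b,c,d$ is dropped — equivalently, if we only require $8n+4$ to be a sum of four odd squares — this is immediate, because $8n+4\equiv 4\pmod 8$; for instance Gauss's three–triangular–number theorem gives $8n+3=(2u+1)^2+(2v+1)^2+(2w+1)^2$, to which one appends $1^2$. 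So the whole substance of the theorem is the sign condition together with the congruences $4a-1\equiv 4b-1\equiv 3$, $4c+1\equiv 4d+1\equiv 1\pmod 4$.

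Next I would eliminate the squares using the elementary identity $a(2a-1)+c(2c+1)=(a+c)^2+(a-c)(a-c-1)$ (both sides expand to $2a^2+2c^2-a+c$) and its twin for $b,d$. Putting $P=a+c$, $D=a-c$, $Q=b+d$, $E=b-d$, the map $(a,b,c,d)\mapsto(P,D,Q,E)$ is a bijection of $\mathbb{N}_0^4$ onto the quadruples with $P,Q\ge 0$, $|D|\le P$, $|E|\le Q$, $D\equiv P$ and $E\equiv Q\pmod 2$, carrying the represented value to $P^2+Q^2+D(D-1)+E(E-1)$. Since $D(D-1)=k(k+1)$ forces $D\in\{k+1,-k\}$, a short check shows an admissible $D$ exists exactly when $k\le P$. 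Hence Theorem~\ref{theorem1} is equivalent to the statement $(\star)$: \emph{every $n\ge 0$ has the form $n=P^2+Q^2+k(k+1)+l(l+1)$ with $0\le k\le P$ and $0\le l\le Q$}. Dropping the two inequalities makes $(\star)$ easy too: $4n+2$ has no representation $4^s(8t+7)$, so by the three-squares theorem $4n+2=x^2+y^2+z^2$, where a count modulo $4$ forces exactly one of $x,y,z$ to be even; writing $4n+2=e^2+0^2+o_1^2+o_2^2$ and dividing by $4$ yields an unconstrained representation. So the entire theorem has been reduced to arranging $k\le P$ and $l\le Q$.

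That last step is the main obstacle, and I would reduce it to the case $l=0$: it suffices to produce, for each $n$, an integer $k\ge 0$ with $3k^2+k\le n$ such that $n-k(k+1)$ is a sum of two squares — for then, writing $n-k(k+1)=P^2+Q^2$ with $P\ge Q$, one has $P^2\ge\frac12\bigl(n-k(k+1)\bigr)\ge k^2$, hence $k\le P$, while $l=0\le Q$ is free. So the crux is: \emph{among the $\asymp\sqrt n$ integers $n-k(k+1)$ with $0\le k\le\sqrt{n/3}$, at least one is a sum of two squares.} The route I expect to be cleanest is analytic: the number of representations of $n$ in the original form is the $n$-th coefficient of the weight-two form $\bigl(\sum_{a\ge 0}q^{2a^2-a}\bigr)^2\bigl(\sum_{c\ge 0}q^{2c^2+c}\bigr)^2$, whose Eisenstein component has $n$-th coefficient $\gg n$ while the cuspidal part is $O(n^{1/2+\varepsilon})$, forcing positivity for all $n$ beyond an explicit bound and leaving a finite verification. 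A more hands-on alternative is a sieve lower bound for the number of $k\le\sqrt{n/3}$ with $n-k(k+1)$ a sum of two squares. In either case the delicate point — where the real work sits — is the interaction of the multiplicative condition ``sum of two squares'' with the sparse quadratic sequence $k(k+1)$.
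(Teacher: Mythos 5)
Your reduction is correct as far as it goes, and it lands on essentially the same normal form the paper works with: your change of variables $P=a+c$, $D=a-c$ is exactly the inverse of the identity $A^2+2T_z=T_{A+z}+T_{A-z-1}$ used in the paper, and your statement $(\star)$ (every $n=P^2+Q^2+2T_k+2T_l$ with $k\le P$, $l\le Q$) is precisely the target the paper also aims at. The gap is that you do not prove $(\star)$: you reduce it to a ``crux'' and then only name two methods that might establish it, neither of which works as described. The modular-forms route fails at the outset, because $\sum_{a\ge 0}q^{2a^2-a}$ is a one-sided (partial) theta series; since $2a^2-a=2b^2-b$ forces $a=b$, there is no involution of $\mathbb{Z}$ folding the terms with $a<0$ onto those with $a\ge 0$, so this series is not half of the modular theta function $\sum_{a\in\mathbb{Z}}q^{2a^2-a}$, and the fourth-degree product you write down is not a weight-two modular form — there is no Eisenstein-plus-cuspidal decomposition to appeal to. (The non-negativity constraints you correctly identified as ``the whole substance of the theorem'' are exactly what destroys modularity.) The sieve alternative — that some $n-k(k+1)$ with $0\le k\le\sqrt{n/3}$ is a sum of two squares — is left entirely unexecuted, and detecting sums of two squares in a quadratic sequence of length $\asymp\sqrt{n}$ is a genuinely hard analytic problem, considerably harder than Theorem \ref{theorem1} itself.

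The difficulty is largely self-inflicted by forcing $l=0$. The paper proves $(\star)$ elementarily by letting both $k$ and $l$ be nonzero but small while both $P$ and $Q$ are large: for $n$ odd, take the largest $c$ with $2c^2+2c+1\le n$, so that $m=\frac{n-2c^2-2c-1}{2}<2c+2$; Corollary \ref{2t+t+t} (the three-squares theorem applied to $4m+2$, which has no exceptional values) gives $m=2T_d+T_x+T_y$ with $d,x,y=O(\sqrt{c})=O(n^{1/4})$; and the recombination $2c^2+2c+1+2d^2+2d=(c+d+1)^2+(c-d)^2$ yields $n=(c+d+1)^2+(c-d)^2+2T_x+2T_y$ with both squares of size about $n/2$, so the inequalities $y<c+d+1$ and $x<c-d$ hold automatically for $n>200$; the even case is parallel, using Corollary \ref{s+t+t}. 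If you replace your crux with this greedy extraction plus the three-squares theorem, your reduction closes into a complete proof.
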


\begin{theorem}
Any non-negative integer number $n$ can be written in the form $$2a(2a-1)+b(2b-1)+2c(2c+1)+d(2d+1),$$ where $a,b,c,d\in \mathbb{N}_0$.
\label{theorem2}
\end{theorem}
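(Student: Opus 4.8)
The plan is to translate Theorem~\ref{theorem2} into a statement about sums of squares and then bring the three–square theorem to bear. Writing $k(2k-1)=T_{2k-1}$ and $k(2k+1)=T_{2k}$, and using $8T_k+1=(2k+1)^2$, one multiplies the sought identity $n=2T_{2a-1}+T_{2b-1}+2T_{2c}+T_{2d}$ by $8$ and adds $6$ to reach the equivalent assertion
\[
8n+6 \;=\; 2(4a-1)^2+(4b-1)^2+2(4c+1)^2+(4d+1)^2 .
\]
Since a square sees only the absolute value of its base, and since $\{4a-1:a\in\mathbb{N}_0\}$ is the set of integers $\equiv-1\pmod 4$ which are $\ge-1$, while $\{4c+1:c\in\mathbb{N}_0\}$ is the set of positive integers $\equiv1\pmod 4$, this is the same as asking that $8n+6$ be represented by the diagonal form $2X^2+Y^2+2Z^2+W^2$ in integers with $|Z|,|W|\equiv1\pmod 4$ and with $|X|,|Y|$ either equal to $1$ or $\equiv3\pmod 4$ (so that $X,Y,Z,W$ are automatically all odd).

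Next I would strip off one of the coefficient-$1$ variables to reach an honest ternary problem. Fixing $b$ and putting $N=8n+6-(4b-1)^2$, we have $N\equiv5\pmod 8$, and it remains to represent $N$ by $2X^2+2Z^2+W^2$ with $X,Z,W$ of the required types. Using $2X^2+2Z^2=(X+Z)^2+(X-Z)^2$, together with $X+Z=4(a+c)$ and $X-Z=2\bigl(2(a-c)-1\bigr)$ when $X=4a-1,\ Z=4c+1$, this becomes: represent $N$ as a plain sum of three squares $N=U^2+V^2+W^2$ with $U\equiv0\pmod 4$ and $V\equiv2\pmod 4$, subject to the bookkeeping conditions that let us solve back for non-negative $a,c,d$ — namely $W>0$ with $W\equiv1\pmod 4$; that $8\mid U+V+2$ for an appropriate sign of $V$ (exactly one of the two signs works, and which one is dictated by $U,V$ modulo $8$); and that $|V|\le U+2$, or the stronger $|V|\le U-2$ in the residue cases where the forced sign is $+$. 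The fortunate point is that the congruences on $U$ and $V$ cost nothing: modulo $8$ the only way to write $5$ as a sum of three squares is $0+1+4$, so in any representation $N=x^2+y^2+z^2$ one base is $\equiv0\pmod 4$, one is $\equiv2\pmod 4$ and one is odd. Thus all that is left to arrange is the residue of the odd base modulo $4$ and the size relation between the two even squares.

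This last step is the heart of the matter, and I expect it to be the main obstacle. The degrees of freedom are: one may instead strip off $(4d+1)^2$ (forcing the odd base to be $\equiv3\pmod 4$, or $1$, rather than $\equiv1\pmod 4$), and one may let the stripped square range over its admissible values, each choice altering $N$; moreover a given $N$ typically has many three–square representations, with the odd base falling into either residue class and with the even part variously balanced. One must prove that for some admissible choice $N$ has a representation whose odd base is in the prescribed class and whose $\equiv2\pmod 4$ component does not exceed, within $2$, the $\equiv0\pmod 4$ component. Making this rigorous seems to require either an effective equidistribution input for the integer points of $U^2+V^2+W^2=N$, or — better suited to a short note — a direct construction exhibiting a good representation; finding the right such construction is where the real work lies. (The unweighted Theorem~\ref{theorem1} should yield to the same scheme and is easier: removing a square from $8n+4$ leaves a sum of three squares with no coefficient $2$, hence no balance condition at all — only a constraint on the residues modulo $4$ of the three, automatically odd, bases.)
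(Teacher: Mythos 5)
Your translation into squares is correct and is in fact parallel to what the paper itself does: completing $T_k$ to $(2k+1)^2$, merging the two coefficient-$2$ squares via $2X^2+2Z^2=(X+Z)^2+(X-Z)^2$, and letting the arithmetic modulo $8$ force the $0$--$2$--odd pattern of the three bases. The sign and divisibility bookkeeping you describe (that $8\mid U+V+2$ holds for exactly one choice of sign of $V$, and that this congruence already makes both $a$ and $c$ integral) also checks out. But what you have produced is a reduction, not a proof: you stop exactly at the two conditions that carry the entire difficulty, namely (i) forcing the odd base of the ternary representation into a prescribed residue class modulo $4$, and (ii) the size inequality between the $\equiv 0$ and $\equiv 2 \pmod 4$ components. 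Condition (i) in particular is not bookkeeping: the three-square theorem gives no control whatsoever over the class modulo $4$ of the odd base, and essentially all of the paper's technical apparatus exists to supply that control.

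Concretely, the paper resolves this as follows. One chooses the square to be stripped off so that the residual is divisible by $t^2$ for some $t\in\{5,13,61\}$ --- possible whenever $t\nmid 4n+3$, because $\left(\frac{2}{t}\right)=-1$ forces $4n+3$ to be either a square or twice a square modulo $t^2$, which is what produces the two subcases (stripping $A^2$ versus $2A^2$, i.e.\ your two choices of which coefficient-$1$ or coefficient-$2$ block to peel). The divisibility by $t^2$ buys the freedom to rotate the sum of the two odd squares through the identity $(u^2+v^2)(p^2+q^2)=(up-vq)^2+(vp+uq)^2$ applied to the two essentially different representations of $t^2$, and this toggles the classes modulo $4$ of the odd bases (Lemma \ref{differentparity}); that is how condition (i) is met. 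Condition (ii) is met by taking the stripped square within $t^2$ of its extremal admissible value, a loss of only $O(t^2)$ that is harmless once $n>(12+8\sqrt{2})t^4$, the finitely many smaller $n$ being checked directly. Finally, the case $3965=5\cdot 13\cdot 61\mid 4n+3$, where no admissible $t$ exists, is handled by descent: one represents $\frac{4n+3}{3965}$ in the target quaternary form and multiplies back up using the identities of Lemmas \ref{oddsquares} and \ref{evensquare+oddsquare}. Unless you supply a substitute for this machinery (an equidistribution statement for integer points on $x^2+y^2+z^2=N$ refined by congruence conditions would do it in principle, but it is neither elementary nor, without serious effort, effective), your outline does not close.
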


\begin{rem}
Since $a(2a-1)=T_{2a-1}$ and $c(2c+1)=T_{2c}$, the first theorem is essentially about representing $n$ in the form $T_x+T_y+T_z+T_t$ where two of the numbers $x,y,z,t$ are even and two are odd, whereas the second theorem is about representing numbers in the form $2T_x+2T_y+T_z+T_t$ where $x-y$ and $z-t$ are odd.
\end{rem}

\begin{rem}
The facts have been conjectured by \emph{Zhi-Wei Sun}, see [2, Conj.5.3].
\end{rem}
In order to prove the first theorem, we are going to use one lemma.
\begin{lemma}
Any number which is not of the form $4^l(8k+7)$ can be written as $a^2+b^2+c^2$ with non-negative $a,b,c$.
\label{s+s+s}
\end{lemma}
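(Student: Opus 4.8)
The statement is the classical three-square theorem of Legendre and Gauss; the plan is to recall a proof (one could equally well just cite it). First, the non-negativity of $a,b,c$ is no restriction: since $a^{2}=(-a)^{2}$, every representation by three integer squares yields one by three non-negative squares, so it suffices to show that $n$ is a sum of three squares of integers precisely when $n$ is not of the form $4^{l}(8k+7)$. The ``only if'' direction is the easy congruence check (squares are $0,1,4\pmod 8$, so $7\pmod 8$ is impossible, and $n=4m$ is a sum of three squares iff $m$ is), so I will concentrate on the ``if'' direction.

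I would route this through the rationals. By the Davenport–Cassels lemma — applicable because $f(x,y,z)=x^{2}+y^{2}+z^{2}$ has integral Gram matrix and every point of $\mathbb{Q}^{3}$ lies within Euclidean distance $\tfrac{\sqrt 3}{2}<1$ of a lattice point — an integer is a sum of three integer squares as soon as it is a sum of three rational squares. And $n$ is a sum of three rational squares exactly when the quaternary form $q=\langle 1,1,1,-n\rangle$ is isotropic over $\mathbb{Q}$ (any isotropic vector has nonzero last coordinate, since $x^{2}+y^{2}+z^{2}$ is anisotropic over $\mathbb{Q}$). By the Hasse–Minkowski theorem this is equivalent to $q$ being isotropic over $\mathbb{R}$ and over every $\mathbb{Q}_{p}$.

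Now the local analysis. Over $\mathbb{R}$ it is immediate ($n>0$; the case $n=0$ is trivial anyway). Over $\mathbb{Q}_{p}$ with $p$ odd it is automatic, because already the subform $\langle 1,1,1\rangle$ is isotropic there: the congruence $x^{2}+y^{2}\equiv -1\pmod p$ is solvable, and Hensel lifts the point $(x_{0},y_{0},1)$ to $\mathbb{Q}_{p}$. So the whole content sits at $p=2$. Here the unique anisotropic quaternary form over $\mathbb{Q}_{2}$ is $\langle 1,1,1,1\rangle$ — the norm form of the Hamilton quaternions, which constitute the quaternion division algebra over $\mathbb{Q}_{2}$ — so by Witt cancellation $q$ is anisotropic over $\mathbb{Q}_{2}$ iff $\langle -n\rangle\cong\langle 1\rangle$ over $\mathbb{Q}_{2}$, i.e. iff $-n$ is a square in $\mathbb{Q}_{2}^{\times}$. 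Writing $n=2^{a}m$ with $m$ odd, $-n$ is a square in $\mathbb{Q}_{2}^{\times}$ exactly when $a$ is even and $m\equiv 7\pmod 8$, that is, exactly when $n=4^{l}(8k+7)$. Hence $q$ is isotropic at $2$ precisely for the $n$ we want, and the lemma follows.

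The one genuinely computational point — and the only place where the exceptional shape $4^{l}(8k+7)$ is forced on us — is the $2$-adic step: pinning down the anisotropic quaternary $\mathbb{Q}_{2}$-form and then tracking the square class of $-n$ in $\mathbb{Q}_{2}^{\times}$; the remaining cost of the argument is the inputs themselves, Davenport–Cassels (an elementary descent) and Hasse–Minkowski for quaternary forms (which ultimately rests on Dirichlet's theorem on primes in arithmetic progressions). If one prefers to avoid $p$-adic language entirely, the same conclusion is reachable by the classical elementary route: for each admissible residue of the squarefree part of $n$ one uses Dirichlet's theorem to find a small integer $x$ of the correct parity with $n-x^{2}$ equal to a prime $\equiv 1\pmod 4$ or twice such a prime, hence a sum of two squares.
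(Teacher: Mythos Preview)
Your argument is correct: the Davenport--Cassels lemma reduces the problem to rational representability, Hasse--Minkowski localises it, the odd places are automatic because $\langle 1,1,1\rangle$ is already isotropic over $\mathbb{Q}_p$ for $p$ odd, and at $p=2$ the Witt cancellation against the unique anisotropic quaternary form $\langle 1,1,1,1\rangle$ gives exactly the condition that $-n$ be a $2$-adic square, i.e.\ $n=4^{l}(8k+7)$.

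That said, the paper does not prove this lemma at all: it simply attributes the result to Legendre and cites a textbook reference. So your route is not so much \emph{different} from the paper's as \emph{supplied in place of} a citation. For the paper's purposes the bare citation is entirely adequate, since the three-square theorem is used only as a black box; your sketch, while sound and standard, invokes substantially heavier machinery (Hasse--Minkowski, hence ultimately Dirichlet on primes in progressions) than the surrounding elementary arguments, which is a mild mismatch of register. The brief mention of the classical elementary approach at the end is closer in spirit to what one might expand if a self-contained proof were wanted, though as you note it too leans on Dirichlet.
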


\begin{proof}
This result is due to Legendre. See, for example, [1, Th. 5.4.12].
\end{proof}

\begin{cor}
Any number $m$ can be written as $x^2+T_y+T_z.$
\label{s+t+t}
\end{cor}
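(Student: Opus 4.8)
The plan is to reduce the corollary to Lemma~\ref{s+s+s} (Legendre's three-square theorem) by means of the algebraic identity
\[
4\bigl(x^2+T_y+T_z\bigr)+1=(2x)^2+(y+z+1)^2+(y-z)^2 .
\]
This is immediate once one notes $4T_y=2y^2+2y$ and $(y+z+1)^2+(y-z)^2=2y^2+2z^2+2y+2z+1=4T_y+4T_z+1$. Consequently it suffices to exhibit, for a given $m$, a decomposition of $4m+1$ into three squares, one of which is even (this becomes $2x$) while the remaining two fit the template $\bigl(y+z+1,\;y-z\bigr)$.

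First I would apply Lemma~\ref{s+s+s} to $N=4m+1$. Since $N$ is odd and $N\equiv 1\pmod 4$, it is not of the shape $4^l(8k+7)$ (such numbers are divisible by $4$ or congruent to $7$ modulo $8$), so $4m+1=a^2+b^2+c^2$ for some non-negative integers $a,b,c$. Next comes a parity check: reducing $a^2+b^2+c^2=4m+1$ modulo $4$ and using that squares are $0$ or $1$ mod~$4$, exactly one of $a,b,c$ is odd. Relabel so that $a$ is even and $c$ is odd (hence $b$ is even too), and set $x=a/2$.

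It then remains to recover $y,z$ from the pair $(b,c)$, with $b$ even and $c$ odd. Here I would take $y=\tfrac12\bigl(|b|+|c|-1\bigr)$ and $z=\tfrac12\bigl(\,\bigl||b|-|c|\bigr|-1\bigr)$. Because $|b|$ and $|c|$ have opposite parity, both numerators are even and non-negative (using that $c$ odd forces $|c|\ge 1$ and that $|b|\ne|c|$), so $y,z\in\mathbb N_0$; and a one-line computation gives $y+z+1=\max(|b|,|c|)$ and $y-z=\min(|b|,|c|)$, whence $(y+z+1)^2+(y-z)^2=b^2+c^2$. Plugging this back into the identity yields $4m+1=4\bigl(x^2+T_y+T_z\bigr)+1$, and dividing by $4$ proves the claim.

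The argument is essentially routine once the identity above is spotted; the only delicate point is this last step, ensuring that the extracted $y$ and $z$ are genuinely non-negative integers, which is why I would package the recovery of $(y,z)$ through $\max$ and $\min$ of $|b|,|c|$ rather than handling signs case by case. I do not expect any obstacle beyond that bit of bookkeeping.
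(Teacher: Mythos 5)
Your proof is correct and is essentially the paper's own argument: the paper also applies Legendre's theorem to $4m+1$, isolates the single odd term $c$, sets $x=a/2$, and converts $b^2+c^2$ into $T_{\frac{b+c-1}{2}}+T_{\frac{b-c-1}{2}}$, which is exactly your identity $(y+z+1)^2+(y-z)^2=4T_y+4T_z+1$ read backwards. The only difference is cosmetic: your $\max/\min$ bookkeeping makes the non-negativity of $y,z$ explicit, which the paper leaves implicit.
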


\begin{proof}
Since $4m+1$ is not of the form $4^l(8k+7)$, we have $4m+1=a^2+b^2+c^2$. Two of the numbers $a,b,c$ have different parity, suppose they are $b$ and $c$, then $a$ is even and we obtain 
\begin{align*}
m&=\frac{(8m+2)-2}{8}=\frac{2a^2+(b+c)^2-1+(b-c)^2-1}{8}\\
&=(a/2)^2+T_{\frac{b+c-1}{2}}+T_{\frac{b-c-1}{2}}.
\end{align*}
\end{proof}

\begin{cor}
Any number $m$ can be written as $4T_x+T_y+T_z.$
\end{cor}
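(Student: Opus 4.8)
The plan is to mirror the proof of Corollary~\ref{s+t+t}, replacing the auxiliary quantity $4m+1$ by $8m+6$. First I would note that $8m+6 = 2(4m+3)$ is twice an odd number, hence divisible by $2$ but not by $4$; together with $8m+6 \equiv 6 \pmod 8$ this shows that $8m+6$ is not of the form $4^l(8k+7)$, so Lemma~\ref{s+s+s} supplies non-negative integers $A,B,C$ with $8m+6 = A^2+B^2+C^2$.

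Next comes the one genuinely necessary observation: a residue computation modulo $8$. Since every square is congruent to $0$, $1$ or $4$ modulo $8$, the only way three squares can sum to $6 \pmod 8$ is as $4+1+1$. Hence, after relabelling, $A^2 \equiv 4 \pmod 8$ while $B$ and $C$ are odd. The congruence $A^2 \equiv 4 \pmod 8$ forces $A \equiv 2 \pmod 4$ (an integer divisible by $4$ has square $\equiv 0 \pmod{16}$), so $A$ is a positive even number which is not a multiple of $4$, and in particular $A/2$ is a positive odd integer.

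Then I would set $A/2 = 2x+1$, $B = 2y+1$, $C = 2z+1$ with $x,y,z \in \mathbb{N}_0$; this is legitimate because none of $A,B,C$ vanishes. Using the identity $(2k+1)^2 = 8T_k+1$, we get $A^2 = 4(8T_x+1) = 32T_x+4$, $B^2 = 8T_y+1$ and $C^2 = 8T_z+1$. Substituting into $8m+6 = A^2+B^2+C^2$ and simplifying gives $8m = 32T_x+8T_y+8T_z$, and dividing by $8$ yields $m = 4T_x+T_y+T_z$, as desired.

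The argument is short, and the only step requiring care is the modulo-$8$ bookkeeping that pins down exactly one of the three squares as $\equiv 2 \pmod 4$; once that is in place everything is forced and the remainder is routine algebra. The non-negativity of $x,y,z$ is automatic, since $A^2 \equiv 4$ and $B^2 \equiv C^2 \equiv 1 \pmod 8$ already prevent $A,B,C$ from being zero.
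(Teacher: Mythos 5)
Your proof is correct and follows essentially the same route as the paper: write $8m+6=a^2+b^2+c^2$ via Legendre's theorem, use the residue $6\pmod 8$ to force the pattern $4+1+1$ and hence $a\equiv 2\pmod 4$ with $b,c$ odd, and convert via $(2k+1)^2=8T_k+1$. You simply spell out the modulo-$8$ bookkeeping that the paper leaves implicit.
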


\begin{proof}
Here we write $8m+6=a^2+b^2+c^2$, by Lemma \ref{s+s+s} as $8m+6$ is not of the form $4^l(8k+7)$. Considering it modulo $8$, we can assume that $b,c$ are odd and 
$a \equiv 2 \pmod4$. We obtain $$m=\frac{(8m+6)-6}{8}=\frac{4((a/2)^2-1)+ b^2-1+c^2-1}{8}=4T_{\frac{a-2}{4}}+T_{\frac{b-1}{2}}+T_{\frac{c-1}{2}}.$$
\end{proof}

\begin{cor}
$2T_x+T_y+T_z$ represents all non-negative integers.
\label{2t+t+t}
\end{cor}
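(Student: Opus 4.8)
The plan is to follow the template of the previous corollaries: convert the assertion $m = 2T_x+T_y+T_z$ into a statement about sums of three squares, apply Lemma~\ref{s+s+s}, and recover the triangular numbers by a parity adjustment. Using $(2t+1)^2 = 8T_t+1$, a short computation shows
$$m = 2T_x + T_y + T_z \iff 8m+4 = 2(2x+1)^2 + (2y+1)^2 + (2z+1)^2,$$
so the target form carries a stray coefficient $2$ in front of one square. This is exactly the feature that distinguishes this corollary from the preceding two: there $8m+6$ and $4m+1$ were written \emph{directly} as $a^2+b^2+c^2$, whereas $8m+4$ is not literally a sum of three squares of the correct parities, so the coefficient $2$ has to be manufactured.

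Concretely, I would first apply Lemma~\ref{s+s+s} to $4m+2$: its $2$-adic valuation is $1$, so it is not of the form $4^l(8k+7)$, whence $4m+2 = p^2+q^2+r^2$ for non-negative integers $p,q,r$. Reducing modulo $4$ (squares are $0$ or $1\bmod 4$, and $4m+2\equiv 2$) forces exactly one of $p,q,r$ to be even; relabel so that $r$ is even and $p,q$ are odd. The key device is then the elementary identity
$$2p^2 + 2q^2 + 2r^2 = 2p^2 + (q+r)^2 + (q-r)^2,$$
which puts $8m+4$ in the desired shape: both $q+r$ and $q-r$ are odd (as $q$ is odd and $r$ is even), and $p$ is odd as well. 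Setting $|p|=2a+1$, $|q+r|=2b+1$, $|q-r|=2c+1$ with $a,b,c\in\mathbb{N}_0$ gives $8m+4 = 2(2a+1)^2+(2b+1)^2+(2c+1)^2$, and reversing the opening identity yields $m = 2T_a+T_b+T_c$.

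I do not expect a real obstacle: the only points to check are that $p$, $q+r$, $q-r$ are all nonzero — automatic, since each is odd, so $a,b,c$ are genuine non-negative integers — and the routine bookkeeping in passing between $m$ and $8m+4$. If one prefers not to conjure the coefficient $2$ out of nowhere, one can instead keep $4m+2 = p^2+q^2+r^2$ and split the two odd squares as $p^2+q^2 = 2\big(\tfrac{p+q}{2}\big)^2 + 2\big(\tfrac{p-q}{2}\big)^2$, where $\tfrac{p\pm q}{2}$ have opposite parity; this is the same parity juggling read in the other direction and leads to the identical representation.
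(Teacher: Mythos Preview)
Your proof is correct and follows essentially the same route as the paper: write $4m+2=p^2+q^2+r^2$ via Lemma~\ref{s+s+s}, observe that exactly two of $p,q,r$ are odd, and use the identity $2q^2+2r^2=(q+r)^2+(q-r)^2$ on the odd--even pair to produce $8m+4=2(\text{odd})^2+(\text{odd})^2+(\text{odd})^2$, which unwinds to $m=2T_x+T_y+T_z$. The only cosmetic difference is that you single out the even square first and then pair it with one of the odd ones, whereas the paper directly picks the two of different parity; the resulting computation is identical.
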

\begin{proof}
We can write $4m+2=a^2+b^2+c^2$, two of the numbers $a,b,c$ have different parity, suppose they are $b$ and $c$, then $a$ is odd and we obtain 
\begin{align*}
m&=\frac{(8m+4)-4}{8}=\frac{2(a^2-1)+(b+c)^2-1+(b-c)^2-1}{8}=\\
&=2T_a+T_{\frac{b+c-1}{2}}+T_{\frac{b-c-1}{2}}.
\end{align*}
\end{proof}

\begin{id}
$a^2+2T_b=T_{a+b}+T_{a-b-1}=T_{a+b}+T_{b-a}.$
\label{iden}
\end{id}
\begin{proof}
The proof is just a straightforward computation.
\end{proof}
\begin{rem}
We shall use the expression $T_{a+b}+T_{a-b-1}$ if $a>b$ and the expression $T_{a+b}+T_{b-a}$ if $a\leq b$. 
\end{rem}

\begin{proof}[Proof of Theorem \ref{theorem1}]
We shall prove the fact for all $n>200$ as it is easy to check the statement for all numbers up to $200$. 

Suppose first that the number $n$ is odd.
Consider the biggest number $c$ such that $2c^2+2c+1\leq n$. Then $n-2c^2-2c-1$ is even and using Corollary \ref{2t+t+t} we obtain 
$$\frac{n-2c^2-2c-1}{2}=2T_d+T_x+T_y,$$ which can be rewritten as 
\begin{align*}
n&=2c^2+2c+1+2d^2+2d+2T_x+2T_y=\\
&=(c+d+1)^2+(c-d)^2+2T_x+2T_y.
\end{align*}

Now, since $c$ is the biggest number such that $2c^2+2c+1\leq n$, we have $2(c+1)^2+2(c+1)+1>n$, and thus $2T_d+T_x+T_y=\frac{n-2c^2-2c-1}{2}<\frac{2(c+1)^2+2(c+1)+1-2c^2-2c-1}{2}=2c+2$. Since $n>200$, we have $c\geq 9$ and therefore $$d+x<2\sqrt{T_x+T_d}<2\sqrt{2c+2}\leq c,$$ which enables us to conclude that $c-d>x$ and $c+d+1>y$. We obtain
\begin{align*}
n&=(c+d+1)^2+2T_y+(c-d)^2+2T_x=\\
&=T_{c+d+1+y}+T_{c+d-y}+T_{c-d+x}+T_{c-d-x-1}.
\end{align*}

The case when $n$ is even is almost the same.
Consider the biggest number $c$ such that $2c^2\leq n$. Then using Corollary \ref{s+t+t} we obtain 
$$n-2c^2=2(p^2+T_x+T_y),$$ which can be rewritten as 
$$n=(c-p)^2+(c+p)^2+2T_x+2T_y.$$
Now, since $c$ is the biggest number such that $2c^2\leq n$, we have $2(c+1)^2>n$, and thus $p^2+T_x+T_y=\frac{n-2c^2}{2}<\frac{2(c+1)^2-2c^2}{2}=2c+1$. Since $n>200$, we have $c\geq 10$ and therefore $$p+x<2\sqrt{T_x+p^2}<2\sqrt{2c+1}\leq c,$$ which enables us to conclude that $c-p>x$ and $c+p>y$. We obtain 
\begin{align*}
n&=(c-p)^2+2T_x+(c+p)^2+2T_y=\\
&=T_{c-p+x}+T_{c-p-x-1}+T_{c+p+y}+T_{c+p-y-1}.
\end{align*}

\end{proof}
In order to prove the second theorem, we are going to use some more technical lemmas.
\begin{lemma}
If $n$ is a sum of two odd squares and $n$ is divisible by $5^2, 13^2$ or $61^2$, then $n$ can be written as  $a^2+b^2$ with $a$ and $b$ being odd and having different remainders modulo 4.
\label{differentparity}
\end{lemma}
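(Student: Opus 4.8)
The plan is to reduce the lemma to an elementary assertion about $m:=n/2$ and then exploit a special property of the numbers $25$, $169$ and $3721$. Since $n=x^2+y^2$ with $x,y$ odd we have $n\equiv 2\pmod 8$, so $m$ is an odd integer and $m=\bigl(\tfrac{x+y}{2}\bigr)^2+\bigl(\tfrac{x-y}{2}\bigr)^2$ is a sum of two squares. I claim it suffices to produce a representation $m=E^2+O^2$ with $E$ even, $O$ odd and $E>O$: then $a:=E+O$ and $b:=E-O$ are positive odd integers with $a^2+b^2=2m=n$ and $a-b=2O\equiv 2\pmod 4$, so $a,b$ have different residues modulo $4$. (Conversely, every representation of $n$ by two positive odd squares with different residues mod $4$ arises in this way, so this reformulation is faithful.)

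To find such a representation of $m$ I use the hypothesis. Fix $p\in\{5,13,61\}$ with $p^2\mid n$; since $p$ is odd, $p^2\mid m$, say $m=p^2 m'$ with $m'$ odd, and $m'$ is again a sum of two squares by the standard theory of sums of two squares. Write $m'=g^2+h^2$ with $g,h\ge 0$ and, relabelling, with $g$ even and $h$ odd. The key arithmetical fact is that each of $25,169,3721$ is the sum of an odd square and a strictly larger even square: $25=3^2+4^2$, $169=5^2+12^2$, $3721=11^2+60^2$. Writing $p^2=O_0^2+E_0^2$ accordingly ($O_0$ odd, $E_0$ even, $E_0>O_0$), Brahmagupta's identity applied to $m=(O_0^2+E_0^2)(g^2+h^2)$ yields the two representations
\[
m=(O_0g-E_0h)^2+(E_0g+O_0h)^2=(O_0g+E_0h)^2+(E_0g-O_0h)^2,
\]
and there is also the trivial $m=(pg)^2+(ph)^2$; in each of these three the first base is even and the second odd.

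Finally I split into cases according to the sizes of $|g|$ and $|h|$, which are unequal because they have opposite parity. If $|g|>|h|$, the representation $m=(pg)^2+(ph)^2$ already has even part $p|g|$ exceeding odd part $p|h|$. If $|g|<|h|$, I claim one of the two Brahmagupta representations has its even base larger in absolute value than its odd base: if not, then $(O_0g-E_0h)^2\le(E_0g+O_0h)^2$ and $(O_0g+E_0h)^2\le(E_0g-O_0h)^2$, and adding these inequalities cancels the $O_0E_0gh$ cross-terms and leaves $(E_0^2-O_0^2)(g^2-h^2)\ge 0$, which forces $|g|\ge|h|$ since $E_0>O_0$, a contradiction. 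Either way $m$ has a representation $E^2+O^2$ with $E$ even, $O$ odd and $E>O$, and the first paragraph finishes the proof. The one place where anything delicate happens is this cancellation, and it works exactly because the chosen representations of $25,169,3721$ have even part larger than odd part; for a prime like $17$, where $17^2=15^2+8^2$ has the odd part larger, both the argument and the statement fail (for instance $2\cdot 17^2=578=17^2+17^2=23^2+7^2$ has no representation by two odd squares of different residues mod $4$).
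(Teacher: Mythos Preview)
Your proof is correct, but it follows a genuinely different route from the paper's.

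The paper works directly with $n$: it writes $n/t^{2}=p^{2}+q^{2}$ (both $p,q$ necessarily odd, since $n/t^{2}\equiv 2\pmod 4$), and then compares the trivial representation $n=(tp)^{2}+(tq)^{2}$ with a single Brahmagupta representation coming from $t^{2}=3^{2}+4^{2}$, $5^{2}+12^{2}$, or $11^{2}+60^{2}$. The verification is purely congruential: the difference of the two bases in the first representation is $\equiv p-q\pmod 4$, in the second it is $\equiv p+q\pmod 4$, and since $p,q$ are odd exactly one of $p\pm q$ is $\equiv 2\pmod 4$. No inequalities are used at all.

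Your argument instead passes to $m=n/2$, reformulates the target as ``$m$ has a representation $E^{2}+O^{2}$ with $E$ even, $O$ odd and $E>O$'', and then uses \emph{three} representations of $m$ (the trivial one and both Brahmagupta twists). The selection among them is made by an inequality trick---adding the two failure inequalities to cancel cross terms---which hinges on the fact that in $25=3^{2}+4^{2}$, $169=5^{2}+12^{2}$, $3721=11^{2}+60^{2}$ the even part exceeds the odd part. What your approach buys is a clean conceptual explanation of \emph{why} $5,13,61$ work and, as you note, a transparent reason why a prime like $17$ (with $17^{2}=15^{2}+8^{2}$) does not; the paper's mod-$4$ check is shorter but does not isolate this feature as explicitly.
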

\begin{proof}
We shall treat three cases separately. \newline
First, suppose $n$ is divisible by $25$. Let $n/25=p^2+q^2$ with $p\geq q$, then $n=(5p)^2+(5q)^2=(4p-3q)^2+(3p+4q)^2$. We have $5p-5q \equiv p-q \pmod4$, whereas $(4p-3q)-(3p+4q) \equiv p+q \pmod4$. But either $p-q$ or $p+q$ is not divisible by $4$.\newline
Second, suppose $n$ is divisibly by $169$. Let $n/169=p^2+q^2$ with $p\geq q$, then $n=(13p)^2+(13q)^2=(12p-5q)^2+(5p+12q)^2$. We have $13p-13q \equiv p-q \pmod4$, whereas $(12p-5q)-(5p+12q) \equiv 3(p+q) \pmod4$. But either $p-q$ or $p+q$ is not divisible by $4$.\newline
Finally, suppose $n$ is divisibly by $61^2$. Let $n/61^2=p^2+q^2$ with $p\geq q$, then $n=(61p)^2+(61q)^2=(60p-11q)^2+(11p+60q)^2$. We have $61p-61q \equiv p-q \pmod4$, whereas $(60p-11q)-(11p+60q) \equiv p+q \pmod4$. But either $p-q$ or $p+q$ is not divisible by $4$.

\end{proof}

\begin{lemma}
 If $8n+3$ is divisible by $5^2, 13^2$ or $61^2$, then $n$ can be written in the form $T_x+T_y+T_z$, where $x$ and $y$ have different parity.
 \label{t+t+t}
\end{lemma}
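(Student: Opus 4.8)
The plan is to remove the square factor from $8n+3$, invoke Lemma \ref{s+s+s} on the resulting smaller number, and then reinstate the divisibility inside a two-square partial sum so that Lemma \ref{differentparity} becomes applicable. So let $d\in\{5,13,61\}$ be such that $d^2\mid 8n+3$, and set $M=(8n+3)/d^2$. Since $d$ is odd, $d^2\equiv 1\pmod 8$, hence $M\equiv 8n+3\equiv 3\pmod 8$; in particular $M$ is a positive integer not of the form $4^l(8k+7)$, so Lemma \ref{s+s+s} gives $M=a^2+b^2+c^2$ with non-negative $a,b,c$. As squares are $\equiv 0,1,4\pmod 8$ and $M\equiv 3\pmod 8$, all of $a,b,c$ are odd, so in particular positive.

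Multiplying through by $d^2$ yields $8n+3=(da)^2+(db)^2+(dc)^2$. The key observation is that the partial sum $(da)^2+(db)^2=d^2(a^2+b^2)$ is a sum of two odd squares and is divisible by $d^2$, hence by one of $5^2,13^2,61^2$; therefore Lemma \ref{differentparity} applies to it and rewrites it as $e^2+f^2$ with $e,f$ odd and of different remainders modulo $4$. Substituting back,
$$8n+3=e^2+f^2+(dc)^2,$$
which exhibits $8n+3$ as a sum of three odd squares, two of which differ modulo $4$.

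To conclude I would translate back to triangular numbers: writing the positive odd square roots in the display above as $2x+1$, $2y+1$, $2z+1$ turns it into $8n+3=8(T_x+T_y+T_z)+3$, so $n=T_x+T_y+T_z$; and since $2x+1\equiv 1\pmod 4$ exactly when $x$ is even, the two square roots that differ modulo $4$ are precisely two indices among $x,y,z$ of opposite parity. The one place to be careful is signs, since the explicit formulas in Lemma \ref{differentparity} (e.g.\ $4p-3q$) may come out negative: the conclusion ``different remainder modulo $4$'' should be read for the absolute values, which is all that is needed here because $T_x$ depends only on the positive odd root $2x+1=|e|$. I do not expect any genuine obstacle — the whole argument runs on Lemmas \ref{s+s+s} and \ref{differentparity}, and this bookkeeping with signs is the only delicate point.
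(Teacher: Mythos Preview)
Your argument is correct and coincides with the paper's proof essentially line for line: divide out the square factor, apply Lemma~\ref{s+s+s} to the quotient (which is $\equiv 3\pmod 8$, hence a sum of three odd squares), then feed $d^2(a^2+b^2)$ into Lemma~\ref{differentparity} and unwind via $8T_k+1=(2k+1)^2$. Your worry about signs is unnecessary in the context of the paper, since the proof of Lemma~\ref{differentparity} orders $p\ge q$ before forming expressions like $4p-3q$, so the outputs are already positive odd integers.
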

\begin{proof}
Suppose that $t^2|8n+3$, $t \in \{5,13,61\}$. Using Lemma \ref{s+s+s} we can write $\frac{8n+3}{t^2}=p^2+q^2+r^2$ with odd $p,q$ and $r$. Using Lemma \ref{differentparity} we can write $t^2(p^2+q^2)$ as 
a sum of two odd squares $a^2+b^2$ with $a,b$ having different remainders modulo $4$. Then we obtain 
$$n=\frac{(8n+3)-3}{8}=\frac{a^2-1+b^2-1+(tr)^2-1}{8}=T_{\frac{a-1}{2}}+T_{\frac{b-1}{2}}+T_{\frac{tr-1}{2}}.$$
Clearly, $\frac{a-1}{2}$ and $\frac{b-1}{2}$ have different parity.
\end{proof}
\begin{lemma}
If $8n+6$ is divisible by $5^2, 13^2$ or $61^2$, then $n$ can be written in the form $T_x+T_y+4T_z$, where $x$ and $y$ have different parity.
\label{t+t+4t}
\end{lemma}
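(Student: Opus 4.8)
The plan is to follow the proof of Lemma~\ref{t+t+t} step for step, changing only the residue classes involved. First I would exploit that $t^2\equiv 1\pmod 8$ for each $t\in\{5,13,61\}$: the hypothesis $t^2\mid 8n+6$ then forces $\frac{8n+6}{t^2}\equiv 6\pmod 8$, so in particular this integer is not of the form $4^l(8k+7)$ and Lemma~\ref{s+s+s} applies, giving $\frac{8n+6}{t^2}=p^2+q^2+r^2$. Reducing modulo $8$ and recalling that squares are $\equiv 0,1,4\pmod 8$, the residue $6$ can be obtained only as $1+1+4$; after relabeling I may assume $p,q$ are odd and $r\equiv 2\pmod 4$, say $r=2s$ with $s$ odd.

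Multiplying back through by $t^2$ gives $8n+6=t^2(p^2+q^2)+(2ts)^2$. The number $t^2(p^2+q^2)=(tp)^2+(tq)^2$ is a sum of two odd squares and is divisible by $t^2\in\{25,169,61^2\}$, so Lemma~\ref{differentparity} produces a representation $t^2(p^2+q^2)=a^2+b^2$ with $a,b$ odd and lying in different residue classes modulo $4$. Hence $8n+6=a^2+b^2+(2ts)^2$, where $a,b$ are odd, $a\not\equiv b\pmod 4$, and $2ts\equiv 2\pmod 4$ since $t,s$ are odd.

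It then remains to subtract $6$ and divide by $8$, exactly as in the earlier corollaries: since $a$ is odd one has $\frac{a^2-1}{8}=T_{(a-1)/2}$, and a short computation gives $\frac{(2ts)^2-4}{8}=4T_{(ts-1)/2}$ — this is where it matters that $2ts$ is $\equiv 2$ rather than $\equiv 0$ modulo $4$, for otherwise the third summand would not normalize to a clean multiple of a triangular number. Collecting terms yields
$$n=T_{\frac{a-1}{2}}+T_{\frac{b-1}{2}}+4T_{\frac{ts-1}{2}},$$
and because $a$ and $b$ differ modulo $4$, the indices $\frac{a-1}{2}$ and $\frac{b-1}{2}$ have opposite parity, which is precisely the assertion. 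I do not expect a genuine obstacle here: the argument is structurally identical to that of Lemma~\ref{t+t+t}, with Lemma~\ref{differentparity} again doing the substantive work, and the only point needing care is the modulo-$8$ bookkeeping that isolates which of $p,q,r$ is even and shows it is $\equiv 2\pmod 4$.
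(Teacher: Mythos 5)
Your proposal is correct and follows essentially the same route as the paper's own proof: apply Lemma \ref{s+s+s} to $\frac{8n+6}{t^2}$, use the modulo-$8$ analysis to isolate the even square $r\equiv 2\pmod 4$, feed $t^2(p^2+q^2)$ into Lemma \ref{differentparity}, and normalize; your index $\frac{ts-1}{2}$ agrees with the paper's $\frac{tr-2}{4}$ after writing $r=2s$. The only difference is that you spell out why the quotient avoids the form $4^l(8k+7)$, a point the paper leaves implicit.
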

\begin{proof}
Suppose that $t^2|8n+6$, $t \in \{5,13,61\}$. Using Lemma \ref{s+s+s} we can write $\frac{8n+6}{t^2}=p^2+q^2+r^2$. Considering this modulo $8$ we understand that $r$ is congruent to $2$ modulo $4$, and $p,q$ are odd. Using Lemma \ref{differentparity} we can write $t^2(p^2+q^2)$ as a sum of two squares $a^2+b^2$ with $a,b$ having different remainders modulo $4$. Then we obtain 
\begin{align*}
n&=\frac{(8n+6)-6}{8}=\frac{a^2-1+b^2-1+4((tr/2)^2-1)}{8}=\\&=T_{\frac{a-1}{2}}+T_{\frac{b-1}{2}}+4T_{\frac{tr-2}{4}}.
\end{align*}
Most obviously, $\frac{a-1}{2}$ and $\frac{b-1}{2}$ have different parity.
\end{proof}

\begin{lemma}
If $m$ is of the form $p^2+q^2$ with $p,q$ being odd and having different remainders modulo $4$, then so is $3965\cdot m$.
\label{oddsquares}
\end{lemma}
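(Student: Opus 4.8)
The plan is to realize $3965$ itself as a sum of two squares and then combine it with the given representation of $m$ via the Brahmagupta--Fibonacci identity (equivalently, multiplication of Gaussian integers), checking afterwards that the two resulting squares land in different residue classes modulo $4$.

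First I would record that $3965 = 5\cdot 13\cdot 61$ and that each of $5,13,61$ is a prime congruent to $1$ modulo $4$, so $3965$ is a sum of two squares; concretely one may take $3965 = 11^2+62^2$, which can also be obtained by composing $5=1^2+2^2$, $13=2^2+3^2$, $61=5^2+6^2$. Fix any such representation $3965 = u^2+v^2$. Since $3965$ is odd, exactly one of $u,v$ is even, and after relabelling I may assume $u$ is odd and $v$ is even.

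Next, given $m=p^2+q^2$ with $p,q$ odd and of different residues modulo $4$, I would apply the identity
\[
3965\,m = (u^2+v^2)(p^2+q^2) = (up-vq)^2 + (uq+vp)^2,
\]
and set $a=up-vq$, $b=uq+vp$ (signs are irrelevant, since only the squares matter). Because $u$ is odd, $v$ is even and $p,q$ are odd, both $a$ and $b$ are odd; in particular they are nonzero. It then remains only to show $a\not\equiv b\pmod 4$.

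For this I would use the elementary observation that, for odd $p,q$, the condition ``$p$ and $q$ have different residues modulo $4$'' is equivalent to $p+q\equiv 0\pmod 4$ and $p-q\equiv 2\pmod 4$. Then
\[
a-b = (up-vq)-(uq+vp) = u(p-q) - v(p+q) \equiv 2u \equiv 2 \pmod 4,
\]
using $p-q\equiv 2$, $p+q\equiv 0\pmod 4$ and the fact that $u$ is odd. Hence $a$ and $b$ are two odd integers differing by $2$ modulo $4$, so they have different residues modulo $4$, exactly as required, and $3965\,m = a^2+b^2$ is the desired representation. The argument is essentially a one-line computation once the identity is in place; the only point needing a little care is translating the mod-$4$ hypothesis on $p,q$ into the linear congruences $p+q\equiv 0$ and $p-q\equiv 2\pmod 4$, and keeping track of the parities of $u,v,p,q$ so that $a,b$ indeed come out odd. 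I do not expect any genuine obstacle beyond this bookkeeping.
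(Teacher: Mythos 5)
Your setup is the same as the paper's (compose $3965=u^2+v^2$ with $m=p^2+q^2$ via the Brahmagupta--Fibonacci identity and track residues modulo $4$), and your congruence $a-b=u(p-q)-v(p+q)\equiv 2\pmod 4$ is correct for the \emph{signed} quantities $a=up-vq$, $b=uq+vp$. The gap is the parenthetical ``signs are irrelevant, since only the squares matter.'' Signs are irrelevant to the squares, but not to the conclusion: negating an odd integer swaps its residue class modulo $4$ between $1$ and $3$. Here $b=uq+vp$ is always positive, while $a=up-vq$ can be negative; when that happens, the actual representation $3965m=|a|^2+b^2$ by nonnegative integers satisfies $|a|\equiv -a\equiv b\pmod 4$, i.e.\ the two odd numbers land in the \emph{same} class, which is exactly what the lemma must rule out. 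Concretely, with your choice $u=11$, $v=62$ and $m=10=1^2+3^2$, you get $39650=175^2+95^2$ (or, swapping $p$ and $q$, $39650=29^2+197^2$); in both cases the two odd numbers are congruent modulo $4$, so your construction fails to witness the lemma for $m=10$, even though the lemma is true there via $39650=125^2+155^2$. This matters downstream: the lemma is ultimately used to produce $(2x+1)^2+(2y+1)^2$ with $x,y\geq 0$ and $x-y$ odd, so one genuinely needs positive representatives in different classes modulo $4$.

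The paper's proof is the needed repair: it assumes $q\leq p$ and uses the decomposition $3965=53^2+34^2$, whose odd coefficient exceeds its even one, so that $53p-34q\geq(53-34)q>0$ and both entries of $(53p-34q)^2+(34p+53q)^2$ are positive odd numbers; the residue computation then applies to the numbers themselves rather than to signed proxies. Your argument becomes correct if you add the ordering $p\geq q$ and restrict to a decomposition $3965=u^2+v^2$ with $u$ odd, $v$ even, and $u>v$ (so $up-vq>0$); with $11^2+62^2$ no such fix is available within this single composition, and one would have to pass to a different Gaussian factorization.
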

\begin{proof}
Indeed, suppose that $q\leq p$, then $3965\cdot m = (53^2+34^2)(p^2+q^2)=(53p-34q)^2+(34p+53q)^2$. Both numbers are odd and $(53p-34q)+(34p+53q) \equiv 3p+3q \pmod4.$
\end{proof}

\begin{lemma}
If $m$ is of the form $p^2+q^2$, where $p$ is even, $q$ is odd, and $p>q$ , then $3965\cdot m$ can be written in the same form.
\label{evensquare+oddsquare}
\end{lemma}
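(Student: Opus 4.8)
The plan is to follow the pattern of Lemma~\ref{oddsquares}: combine the given representation $m = p^2 + q^2$ with a suitable representation of $3965$ as a sum of two squares, using the identity $(a^2+b^2)(c^2+d^2) = (ac-bd)^2 + (ad+bc)^2$, and then keep track of the parity and the size of each of the two resulting coordinates. The representation I would use is $3965 = 34^2 + 53^2$, which gives
$$3965\,m = (34^2+53^2)(p^2+q^2) = (34p - 53q)^2 + (53p + 34q)^2 .$$
I would then set $P = 53p + 34q$ and $Q = |34p - 53q|$, so that $3965\,m = P^2 + Q^2$, and show that the pair $(P,Q)$ has exactly the shape required.

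The parities are immediate: since $p$ is even and $q$ is odd, the number $53p+34q$ is even while $34p-53q$ is odd, so $P$ is even and $Q$ is odd (and in particular $Q\neq 0$). Hence the only point needing an argument is the inequality $P>Q$, i.e.\ $53p+34q > |34p - 53q|$. I would prove this by splitting on the sign of $34p-53q$: when $34p\ge 53q$ it reduces to $19p+87q>0$, which is trivial, and when $34p<53q$ it reduces to $87p>19q$, which follows at once from the hypothesis $p>q$. (Conceptually this says that $\arg(34+53i)$, which lies in $(45^\circ,90^\circ]$, plus $\arg(p+qi)$, which lies in $(0^\circ,45^\circ)$, gives an angle in $(45^\circ,135^\circ)$, so the imaginary part of $(34+53i)(p+qi)$ dominates its real part in absolute value; but the two-case computation is shorter to write.)

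I do not expect a real obstacle here. The only thing to be a little careful about is the choice of representation of $3965$: it must be one for which the coordinate that comes out even is simultaneously the larger coordinate for \emph{every} admissible pair $p>q$. Not every representation of $3965$ as a sum of two squares achieves this, but $34^2+53^2$ does — precisely because its argument sits safely inside $(45^\circ,90^\circ]$ — and with that choice the proof is just the parity check together with the short inequality above.
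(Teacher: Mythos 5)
Your proof is correct, and it takes a genuinely different (and arguably cleaner) route than the paper's. The paper also multiplies through by a representation of $3965$ as a sum of two squares via the identity $(a^2+b^2)(p^2+q^2)=(ap-bq)^2+(aq+bp)^2$, but it needs \emph{two} cases depending on the size of $p/q$: for $p>5q$ it uses $3965=59^2+22^2$ and the pair $(59p-22q,\,22p+59q)$, and for $q<p\le 5q$ it uses $3965=46^2+43^2$ and the pair $(46p-43q,\,43p+46q)$, checking in each regime that the even coordinate is the larger one. You instead observe that the single representation $3965=34^2+53^2$ works uniformly: because $\arctan(53/34)$ lies in $(45^\circ,90^\circ)$, the even coordinate $53p+34q$ dominates $|34p-53q|$ for every admissible $p>q>0$, which your two-line sign split ($19p+87q>0$ in one case, $87p>19q$ from $p>q\ge 1$ in the other) verifies. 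The reason the paper needs two cases is exactly the phenomenon you flag at the end: the arguments of $59+22i$ and of $46+43i$ each sit outside $[45^\circ,90^\circ)$, so each of those decompositions only handles part of the range of $p/q$. Your computations all check out ($34^2+53^2=1156+2809=3965$, the parities are as claimed, and $Q$ is odd hence nonzero), so this is a complete proof that trades the paper's case split for a better choice of decomposition.
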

\begin{proof}
Suppose first that $p>5q$, then $3965\cdot m = (59^2+22^2)(p^2+q^2)=(59p-22q)^2+(22p+59q)^2$ and $59p-22q>22p+59q$; otherwise we have
$3965\cdot m = (46^2+43^2)(p^2+q^2)=(46p-43q)^2+(43p+46q)^2$ and $43p+46q>46p-43q$. 
\end{proof}

We now prove the second theorem.

\begin{proof}[Proof of Theorem \ref{theorem2}]
For $n<4\cdot10^8$ the statement can be more or less easily checked straightforwardly.\newline
Suppose first that there exists $t\in \{5,13,61\}$ such that $4n+3$ is not divisible by $t$.
Let us first assume that $4n+3$ is a square modulo $t$, which means that $4n+3$ is a square modulo $t^2$, thus there exists $A$ such that $(4n+3)-4A^2=8\cdot \frac{n-A^2}{2}+3$ is divisible by $t^2$. Moreover, we can choose $A$ such that $\sqrt{n}\geq A \geq \sqrt{n}-t^2$ and $n-A^2$ is even. Then using Lemma \ref{t+t+t} we can write $$\frac{n-A^2}{2}=T_x+T_y+T_z,$$ where $x$ and $y$ have different parity. We now rewrite this as $$n=A^2+2T_z+2T_x+2T_y=T_{A+z}+T_{A-z-1}+2T_x+2T_y,$$ where $A+z, A-z-1$ have different parity as well as $x,y$ do, which basically is the desired form. We only need to show that $A\geq z$ or $A^2\geq 2T_z$. For that it is sufficient to show that $A^2\geq n-A^2$ or $\sqrt{n}-t^2\geq \sqrt{n/2}$, which is true for all $n$ greater than $(6+4\sqrt{2})t^4\leq 2\cdot10^8$.

If $4n+3$ is not a square modulo $t$ then it is a doubled square modulo $t$, as $\left(\frac{2}{t}\right)=-1$. Thus $4n+3$ is a doubled square modulo $t^2$ and there exists $A$ such that $4n+3-8A^2$ is divisible by $t^2$. Moreover, we can choose $A$ such that $\sqrt{n/2}\geq A \geq \sqrt{n/2}-t^2$. Then $8(n-2A^2)+6$ is divisible by $t^2$ and using Lemma \ref{t+t+4t} we can write $$n-2A^2=T_x+T_y+4T_z,$$ where $x$ and $y$ have different parity. We now rewrite this as $$n=2(A^2+2T_z)+T_x+T_y=2T_{A+z}+2T_{A-z-1}+T_x+T_y,$$ where $A+z$, $A-z-1$ have different parity as well as $x,y$ do, which is what we need. We are left with showing that $A\geq z$ or $2A^2\geq 4T_z$, for that it is sufficient to show that $2A^2\geq n-2A^2$ or $\sqrt{n/2}-t^2\geq \sqrt{n/4}$, which is true for all $n$ greater than $(12+8\sqrt{2})t^4\leq 4\cdot 10^8$.

We now treat the case when $4n+3$ is divisible by $3965=5 \cdot 13 \cdot 61$. The original problem is equivalent to representing $n$ in the form $2T_x+2T_y+T_z+T_t=2T_x+2T_y+(\frac{z+t+1}{2})^2+2T_{\frac{z-t-1}{2}}$ or $4n+3=(2x+1)^2+(2y+1)^2+4A^2+(2s+1)^2$ where $x-y$ is odd and $A>s$. We may assume that the fact is already proved for all smaller $n$, then writing $\frac{4n+3}{3965}=(2x_0+1)^2+(2y_0+1)^2+4A_0^2+(2s_0+1)^2$ and using Lemma \ref{oddsquares} and Lemma \ref{evensquare+oddsquare} we obtain the desired form for $4n+3$.
\end{proof}

Finally we formulate an ambitious conjecture.

\begin{conjecture}
Any natural number which is equal to neither $8$, nor $68$ can be written in either of the forms:
$$a(2a-1)+b(2b-1)+c(2c+1), \qquad a(2a-1)+b(2b+1)+c(2c+1),$$
where $a,b,c\in \mathbb{N}_0.$
\end{conjecture}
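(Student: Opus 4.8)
Following the remarks after Theorem~\ref{theorem1}, the first step is to rephrase everything in terms of sums of three squares. Since $8T_k+1=(2k+1)^2$, a number $n$ is of the first form exactly when $8n+3=x^2+y^2+z^2$ with $x,y,z$ positive, $z\equiv 1\pmod 4$ and $x\equiv y\equiv 3\pmod 4$, and of the second form exactly when moreover $y\equiv 1\pmod 4$; here $x,y,z$ are automatically odd, because a sum of three squares that is $\equiv 3\pmod 8$ has all its parts odd. Thus the two forms together cover precisely those $n$ for which $8n+3$ admits a representation by three positive odd squares whose residues mod $4$ are \emph{not all equal} (a part equal to $1$ being allowed to count in either residue class, as it comes from the vanishing triangular number $T_0=T_{-1}$). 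By Lemma~\ref{s+s+s} three-square representations always exist, so the entire content of the conjecture is the existence of a \emph{non-monochromatic} one.

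The positive engine I would isolate is a strengthening of Lemma~\ref{differentparity}: if $M\equiv 2\pmod 8$ is a sum of two squares divisible by $25$, then (by the criterion for sums of two squares) $M/25$ is again a sum of two squares, and being $\equiv 2\pmod 8$ it equals $s^2+t^2$ with $s\ge t\ge 1$ odd; now $M=(5s)^2+(5t)^2=(3s+4t)^2+(4s-3t)^2$, and a look at residues modulo $4$ shows that at least one of these two decompositions --- both of which have positive odd entries, using $s\ge t$ --- has its two entries in different classes mod $4$. The identities $(13s)^2+(13t)^2=(5s+12t)^2+(12s-5t)^2$ and $(61s)^2+(61t)^2=(11s+60t)^2+(60s-11t)^2$ do the same for divisibility by $169$ and by $61^2$. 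Consequently, whenever $8n+3$ has a representation by three positive odd squares one of whose pairwise sums is divisible by $25$, $169$ or $61^2$, the conjecture holds for $n$; in particular it holds whenever $25\mid 8n+3$ (write $8n+3=25(p^2+q^2+r^2)$ and apply this to $M=(5p)^2+(5q)^2$), and likewise whenever $169\mid 8n+3$ or $61^2\mid 8n+3$.

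To reach (almost) all $n$ I would imitate the proof of Theorem~\ref{theorem2}, combining a strong induction with a shift. On the one hand, if $p^2\mid 8n+3$ for an odd prime $p$, then $8n+3=p^2(8m+3)$ with $m<n$, and multiplying a representation of $8m+3$ by $p$ changes the three residues mod $4$ only by one common sign (since $p$ is odd) and hence preserves non-monochromaticity, reducing $n$ to a smaller case. On the other hand, if $8n+3$ is a square modulo $t^2$ for some $t\in\{5,13,61\}$, one may pick an odd $A<\sqrt{8n+3}$ with $\sqrt{8n+3}-A=O(t^2)$ and $t^2\mid 8n+3-A^2$; then $B:=(8n+3-A^2)/t^2$ is a positive integer with $B\equiv 2\pmod 8$ and $8n+3=A^2+t^2B$, so as soon as $B$ is a sum of two squares the preceding paragraph finishes the argument, with $t^2B$ as the distinguished pairwise sum. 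That ``as soon as'' is the heart of the matter, and is where I expect the proof to get stuck: nothing elementary forces $B$ --- which runs through a thin, essentially quadratic sequence --- ever to be a sum of two squares, and what is really being asked for is a three-squares representation of $8n+3$ with one coordinate in a prescribed class modulo $t^2$; worse still, a positive proportion of $n$, namely those with $8n+3$ a non-residue modulo each of $5$, $13$ and $61$, are not reached by any such shift and seem to need an auxiliary prime $\equiv 1\pmod 4$ that grows with $n$. Statements of this kind do hold for all sufficiently large $n$ --- this is the equidistribution of the integer points of the sphere $x^2+y^2+z^2=8n+3$ (Duke's theorem) --- but it is deep, and through its dependence on a possible Siegel zero it is ineffective, so it does not by itself bring the exceptional set down to $\{8,68\}$. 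Indeed ``$8n+3$ has a non-monochromatic representation'' is not a congruence condition on $n$ (were it one, only finitely many residue classes could fail), so no purely local argument can complete the proof; some genuinely global input on the three-squares representations is unavoidable.

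Finally, one should record why $8$ and $68$ must be excluded. Here $8\cdot 8+3=67=7^2+3^2+3^2$ is the only representation, with all parts $\equiv 3\pmod 4$, while $8\cdot 68+3=547=23^2+3^2+3^2=21^2+9^2+5^2$ has exactly these two, one with all parts $\equiv 3$ and one with all parts $\equiv 1\pmod 4$; and in neither case is any of the pairwise sums divisible by $25$, $169$ or $61^2$. A complete proof would therefore have to certify that no other $N\equiv 3\pmod 8$ exhibits this sporadic behaviour --- the difficulty that the word ``ambitious'' is pointing to.
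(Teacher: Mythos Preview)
The paper does not prove this statement: it is explicitly introduced as an ``ambitious conjecture'' and left open, with no proof or proof sketch given. Consequently there is no paper proof to compare your proposal against.

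Your proposal is, in turn, not a proof and you say so yourself. The reformulation --- that the conjecture is equivalent to $8n+3$ admitting a representation as a sum of three positive odd squares whose residues modulo $4$ are not all equal --- is correct, as is your verification that $67$ and $547$ are precisely the two values of $8n+3$ (among small ones) for which every three-odd-squares representation is monochromatic. The partial machinery you set up (handling $n$ with $25\mid 8n+3$, $169\mid 8n+3$, or $61^2\mid 8n+3$ via the two-square identities; descending through square factors by induction; shifting by a square to force divisibility by $t^2$) is sound and parallels the proof of Theorem~\ref{theorem2}, but, as you correctly identify, it leaves a genuine gap: nothing guarantees that the residual $B=(8n+3-A^2)/t^2$ is a sum of two squares, and a positive density of $n$ (those with $8n+3$ a non-residue modulo each of $5,13,61$) escape the shift entirely. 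Your remark that equidistribution of lattice points on spheres (Duke) would handle large $n$ but is ineffective, so cannot pin the exceptional set down to $\{8,68\}$, is exactly the obstruction the word ``ambitious'' signals.

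In short: the paper offers no proof, your proposal offers no proof, and your diagnosis of \emph{why} a proof along these lines stalls is accurate. There is nothing to correct; the conjecture remains open.
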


\textbf{Acknowledgment.} I would like to thank Alexander Luzgarev for fruitful discussions and helpful comments on the proof of Theorem \ref{theorem2}.

\end{document}